\rm \setlength{\textwidth}{160mm}
\newtheorem{theorem}{Theorem}
\newtheorem{lemma}[theorem]{Lemma}
\newtheorem{corollary}[theorem]{Corollary}
\newtheorem{observation}{Observation}
\begin{document}
\title{\Large \bf{Erd\H{o}s-Gallai-type results for colorful\\ monochromatic connectivity
of a graph}\footnote{Supported by NSFC No.11371205, ``973" program
No.2013CB834204, and PCSIRT.}}

\author{\small Qingqiong~Cai, Xueliang~Li, Di~Wu\\
\small Center for Combinatorics and LPMC-TJKLC\\
\small Nankai University, Tianjin 300071, China\\
\small cqqnjnu620@163.com; lxl@nankai.edu.cn; wudiol@mail.nankai.edu.cn}
\date{}
\maketitle
\begin{abstract}
A path in an edge-colored graph is called a \emph{monochromatic
path} if all the edges on the path are colored the same. An
edge-coloring of $G$ is a \emph{monochromatic connection coloring}
(MC-coloring, for short) if there is a monochromatic path joining
any two vertices in $G$. The \emph{monochromatic connection number},
denoted by $mc(G)$, is defined to be the maximum number of colors
used in an MC-coloring of a graph $G$. These concepts were
introduced by Caro and Yuster, and they got some nice results.
In this paper, we will study two kinds of Erd\H{o}s-Gallai-type
problems for $mc(G)$, and completely solve them.

{\flushleft\bf Keywords}: monochromatic path, MC-coloring,
monochromatical connection number, Erd\H{o}s-Gallai-type problem.

{\flushleft\bf AMS subject classification 2010}: 05C15, 05C35, 05C38,
05C40.
\end{abstract}

\section{Introduction}

All graphs considered in this paper are simple, finite and
undirected. We follow the terminology and notation of Bondy and
Murty \cite{Bondy}. For a graph $G$, we use $V(G)$, $E(G)$, $n(G)$,
$m(G)$, $\Delta(G)$ and $\delta(G)$ to denote the vertex set, edge
set, number of vertices, number of edges, maximum degree and minimum
degree of $G$, respectively. For $D\subseteq V(G)$, let $|D|$ be the
number of vertices in $D$, and $G[D]$ be the subgraph of $G$ induced
by $D$.

Let $G$ be a nontrivial connected graph with an edge-coloring $f:
E(G)\rightarrow \{1,2,\ldots,\ell\},$ $\ell \in \mathbb{N}$, where
adjacent edges may be colored the same. A path of $G$ is a
\emph{monochromatic path} if all the edges on the path are colored
the same. An edge-coloring of $G$ is a \emph{monochromatic
connection coloring} (MC-coloring, for short) if there is a
monochromatic path joining any two vertices in $G$. How colorful can
an MC-coloring be ? This question is the natural opposite of the
recently well-studied problem on rainbow connection number
\cite{CLR, Chartrand, KY, LSS, LiSun} for which we seek to find an
edge-coloring with minimum number of colors so that there is a
rainbow path joining any two vertices.

The \emph{monochromatic connection number} of $G$, denoted by
$mc(G)$, is defined to be the maximum number of colors used in an
MC-coloring of a graph $G$. An MC-coloring of $G$ is called
\emph{extremal} if it uses $mc(G)$ colors. An important property of
an extremal MC-coloring is that the subgraph induced by edges with
one same color forms a tree \cite{Caro}. For a color $i$, the
\emph{color tree} $T_i$ is the tree consisting of all the edges of
$G$ with color $i$. A color $i$ is \emph{nontrivial} if $T_i$ has at
least two edges; otherwise, $i$ is \emph{trivial}. A nontrivial
color tree with $t$ edges is said to \emph{waste} $t-1$ colors.
Every connected graph $G$ has an extremal MC-coloring such that for
any two nontrivial colors $i$ and $j$, the corresponding trees $T_i$
and $T_j$ intersect in at most one vertex \cite{Caro}. Such an
extremal coloring is called \emph{simple}.

These concepts were introduced by Caro and Yuster in \cite{Caro}. A
straightforward lower bound for $mc(G)$ is $m(G)-n(G)+2$. Simply
color the edges of a spanning tree with one color, and each of the
remaining edges may be assigned a distinct fresh color. Caro and
Yuster gave some sufficient conditions for graphs attaining this
lower bound.
\begin{theorem}[\cite{Caro}]\label{thm1}
Let $G$ be a connected graph with $n>3$. If $G$ satisfies any of the
following properties, then $mc(G)=m-n+2$.\\
$(a)$ $\overline{G}$ (the complement of $G$) is 4-connected.\\
$(b)$ $G$ is triangle-free.\\
$(c)$ $\Delta(G)<n-\frac{2m-3(n-1)}{n-3}$.
 In particular, this holds if $\Delta(G)\leq(n+1)/2$, and this also holds if $\Delta(G)\leq n-2m/n$.\\
$(d)$ $Diam(G)\geq3$.\\
$(e)$ $G$ has a cut vertex.
\end{theorem}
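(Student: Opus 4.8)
The lower bound $mc(G)\ge m-n+2$ is exactly the spanning-tree construction recalled above, so in each of the five cases only the reverse inequality $mc(G)\le m-n+2$ has to be proved, and the plan is to start always from the same reduction. Fix a \emph{simple extremal} MC-coloring of $G$ and let $T_1,\dots,T_k$ be its nontrivial color trees, with $t_i:=|E(T_i)|\ge 2$. Since distinct color classes are edge-disjoint and the trivial colors use $m-\sum_i t_i$ edges altogether, the number of colors is $k+(m-\sum_i t_i)$, hence
\[
mc(G)=m-W,\qquad W:=\sum_{i=1}^{k}(t_i-1),
\]
and the entire theorem reduces to proving $W\ge n-2$ under each hypothesis. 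Two elementary observations are used throughout: if some single $t_i\ge n-1$ then already $W\ge n-2$, so one may assume every color tree is small; and since a one-edge path is monochromatic, only non-adjacent pairs of $G$ need work.

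The combinatorial core is to turn ``$W$ small'' into ``few/small trees must cover all non-edges.'' For a non-edge $\{u,v\}$, the monochromatic $u$--$v$ path has length $\ge 2$, so it lies inside a single $T_c$ and $u,v\in V(T_c)$; since the coloring is simple, $|V(T_i)\cap V(T_j)|\le 1$ for $i\ne j$, so in fact each non-edge lies in $V(T_c)$ for a \emph{unique} $c$, and any vertex lying in no $T_i$ has all incident edges trivially colored and must therefore be adjacent to every other vertex. Counting pairs inside the (pairwise almost-disjoint) sets $V(T_i)$, globally and then locally at each vertex, yields the two quantitative engines
\[
\sum_{i}\binom{t_i}{2}\ \ge\ \binom{n}{2}-m,\qquad\quad \sum_{i:\,v\in T_i}t_i\ \ge\ n-1-\deg_G(v)\ \ \text{for every }v .
\]

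For the five cases I would then argue as follows. \emph{(d)}: if $Diam(G)\ge 3$, pick $u,v$ with $d(u,v)=3$; then $N[u]\cap N[v]=\emptyset$, while the trees through $u$ must jointly cover $N[v]$ and the trees through $v$ must jointly cover $N[u]$, and exploiting that these two families of trees overlap only mildly should force $W\ge n-2$. \emph{(e)}: a cut vertex $v$ splits $G$ into pieces $G_1,\dots,G_r$; a monochromatic path cannot pass through $v$ twice, so restriction gives $mc(G)\le\sum_j mc(G_j)$, and an induction together with a careful count of the colors incident to $v$ (which must agree across pieces in order to carry the cross-piece monochromatic paths) should supply the ``$-(r-1)$'' correction bringing the bound down to $m-n+2$. \emph{(b)}: if $G$ is triangle-free, then in any color tree two vertices at tree-distance $2$ are non-adjacent in $G$, so $G[V(T_i)]$ is $T_i$ together with long chords only; feeding this into the master inequality (and treating the degenerate case where $G$ is a star separately) should give $W\ge n-2$. \emph{(c)}: the hypothesis $\Delta(G)<n-\tfrac{2m-3(n-1)}{n-3}$ is tailored so that substituting $\deg_G(v)\le\Delta(G)$ into the two displayed inequalities and optimizing over the profile $(t_1,\dots,t_k)$ yields $W\ge n-2$ with no slack to spare; the two ``in particular'' clauses follow from routine comparisons of $\Delta$, $m$, $n$. \emph{(a)}: if $\overline G$ is $4$-connected then $\Delta(G)\le n-5$ and the non-edge graph is highly connected, which I would use to locate a color tree covering almost all of $V(G)$ — equivalently, to force the $T_i$ to span $V(G)$ with large total excess — again giving $W\ge n-2$.

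The reductions of the first two paragraphs are routine once the cited structure of simple extremal colorings is in hand, so the real work is the case analysis. I expect \emph{(c)} to be the sharpest point, since the hypothesis is essentially the exact threshold and the small cases ($n=4,5$) will have to be checked directly; \emph{(a)} to be the most structural, since converting $4$-connectivity of the complement into a lower bound on the wasted amount is not purely arithmetic; and \emph{(e)}, though conceptually easy, to demand delicate bookkeeping at the cut vertex, which is precisely where a naïve restriction argument loses the decisive ``$-1$''.
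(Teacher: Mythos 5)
This statement is not proved in the paper at all: it is quoted verbatim from Caro and Yuster \cite{Caro}, so there is no in-paper argument to match, and your text has to stand on its own as a reconstruction of their proof. As it stands it does not. Your generic setup is fine and correct: for a simple extremal coloring one indeed has $mc(G)=m-W$ with $W=\sum_i(|E(T_i)|-1)$, every nonadjacent pair lies in a unique nontrivial tree, and the two counting inequalities $\sum_i\binom{t_i}{2}\ge\binom{n}{2}-m$ and $\sum_{i:v\in T_i}t_i\ge n-1-\deg_G(v)$ are valid. But that is only the frame; the entire content of the theorem is the derivation of $W\ge n-2$ in each of the five cases, and for every one of them you stop at ``should force,'' ``should supply,'' ``I would use.'' Cases (a) and (c) in particular are exactly where the real work of \cite{Caro} lies (the threshold in (c) is tight and (a) needs a genuinely structural argument about how the trees interact with the highly connected non-edge graph), and nothing in your sketch indicates how the two displayed inequalities alone would close them.

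Moreover, the one case where you commit to a concrete mechanism, (e), is quantitatively broken. The restriction argument does give that each piece $G_j$ (including the cut vertex) inherits an MC-coloring, hence $mc(G)\le\sum_j mc(G_j)$, but the per-piece bound $mc(G_j)\le m_j-n_j+2$ that your arithmetic needs is false in general: a piece can be complete (two triangles sharing a vertex gives $mc(K_3)=3>m_j-n_j+2=2$), and then $\sum_j mc(G_j)$ exceeds $m-n+2$ by more than the $r-1$ you propose to recover ``by a careful count at $v$'' --- for the bowtie your scheme yields at best $5$ against the true value $3$. So the induction as envisioned cannot work; one has to argue directly about how nontrivial color trees behave across the cut vertex (or derive (e) from the non-$2$-connectedness bound of Theorem \ref{thm2}(2), which you would then also have to prove). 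In short: correct reduction, correct counting lemmas, but none of the five cases is actually established, and the one detailed plan you give fails on a small example.
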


Moreover, the authors proved some nontrivial upper bounds for
$mc(G)$ in terms of the chromatic number, the connectivity and the
minimum degree. Recall that a graph is called
\emph{$s$-perfectly-connected} if it can be partitioned into $s+1$
parts $\{v\},V_1,\ldots,V_s$, such that each $V_j$ induces a
connected subgraph, any pair $V_j,V_r$ induces a corresponding
complete bipartite graph, and $v$ has precisely one neighbor in each
$V_j$. Notice that such a graph has minimum degree $s$, and $v$ has
degree $s$.

\begin{theorem}[\cite{Caro}]\label{thm2}
$(1)$ Any connected graph $G$ satisfies $mc(G)\leq m-n+\chi(G)$.\\
$(2)$ If $G$ is not $k$-connected, then $mc(G)\leq m-n+k$. This is sharp for any $k$.\\
$(3)$ If $\delta(G)=s$, then $mc(G)\leq m-n+s$, unless $G$ is $s$-perfectly-connected, in which case $mc(G)=m-n+s+1$.
\end{theorem}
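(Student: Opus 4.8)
The plan is to pass to a simple extremal MC‑colouring and translate each assertion into a lower bound on the \emph{total waste} $W:=m-mc(G)$. By the quoted structure results such a colouring has every colour class a tree; writing $T_1,\dots,T_p$ for the nontrivial ones, each remaining edge carries a private colour, so $mc(G)=m-W$ with $W=\sum_{i=1}^p\bigl(|V(T_i)|-2\bigr)$. I shall use two elementary facts: \textbf{(a)} every non‑edge $xy$ of $G$ lies in some $T_i$, since its monochromatic path has length at least $2$ and hence sits inside a nontrivial tree containing both ends; \textbf{(b)} each $G[V(T_i)]$ is connected, since it contains $T_i$. Thus $(1)$ becomes $W\ge n-\chi(G)$, $(2)$ becomes $W\ge n-k$ whenever $G$ is not $k$‑connected, and $(3)$ becomes $W\ge n-s$ together with a description of when equality can hold.

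For $(1)$ and $(2)$ I would use one device: exhibit a forest $\mathcal F$ all of whose edges are non‑edges of $G$, and charge the edges of $\mathcal F$ to the waste. For $(1)$, fix an optimal proper vertex‑colouring with independent classes $Q_1,\dots,Q_\chi$ and let $\mathcal F$ be the disjoint union of a spanning tree on each $Q_j$, so $|E(\mathcal F)|=n-\chi$. Assign every edge of $\mathcal F$ to a nontrivial tree containing both its ends, using (a); the edges assigned to a fixed $T_i$ form a subforest of $\mathcal F$ supported on $V(T_i)$, and since $V(T_i)$ meets at least two of the classes $Q_j$ by (b), this subforest has at most $|V(T_i)|-2$ edges. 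Summing over $i$ gives $n-\chi=|E(\mathcal F)|\le\sum_i(|V(T_i)|-2)=W$. For $(2)$: if $G$ is complete then $mc(G)\le m\le m-n+k$ (a complete graph that is not $k$‑connected has $n\le k$); otherwise take a vertex cut $X$ with $|X|\le k-1$, let $C_1,\dots,C_r$ ($r\ge2$) be the components of $G-X$, and let $\mathcal F$ be a spanning tree of the complete multipartite graph on $C_1,\dots,C_r$, so $|E(\mathcal F)|=n-|X|-1$. Charging as before, any tree $T_i$ that receives an $\mathcal F$‑edge covers a pair separated by $X$ and therefore meets $X$, so its assigned subforest lives on $V(T_i)\setminus X$ and has at most $|V(T_i)|-2$ edges; summing gives $W\ge n-|X|-1\ge n-k$. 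Sharpness of $(2)$ for every $k$ is witnessed by $G=K_{k+1}-e$, which is $(k-1)$‑connected: colouring a $2$‑path through the endpoints of $e$ with one colour and every other edge privately gives $mc(G)=m-1=m-n+k$.

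For $(3)$, $G$ has a vertex of degree $s$, hence is not $(s+1)$‑connected, so $(2)$ already yields $W\ge n-s-1$; the task is to gain the last unit except in the exceptional case. If $n=s+1$ then $\delta(G)=s$ forces $G=K_{s+1}$, which is $s$‑perfectly‑connected. Otherwise fix $v$ with $\deg(v)=s$; then $X:=N(v)$ is a vertex cut of size $s$, one component of $G-X$ is $\{v\}$, and the other components comprise exactly the $n-s-1$ non‑neighbours of $v$. Run the charging argument of $(2)$ with $\mathcal F$ taken to be the star in $\overline G$ centred at $v$; this again gives $W\ge n-s-1$, and I now assume equality, i.e.\ $mc(G)=m-n+s+1$. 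Tightness forces: every nontrivial tree receives a star‑edge, hence contains $v$; for each $i$ we get $|V(T_i)\cap X|=1$, say $V(T_i)\cap X=\{x_i\}$, so $v$ has a unique neighbour $x_i$ inside $T_i$ and $\deg_{T_i}(v)=1$; the $x_i$ are distinct; and the sets $V_i:=V(T_i)\setminus\{v\}=\{x_i\}\cup A_i$ are pairwise disjoint, each $G[V_i]$ connected, with $\bigcup_iA_i$ equal to the set of all non‑neighbours of $v$. Since every nontrivial tree is one of the $T_i$ and meets $X$ in a single vertex, no nontrivial tree contains vertices from two different parts; hence every pair of vertices not already inside a common $V_i$ is an edge of $G$, and each vertex of $X$ missed by all the $x_i$ is universal and forms a singleton part. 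This is precisely a partition $\{v\},V_1,\dots,V_s$ exhibiting that $G$ is $s$‑perfectly‑connected. The converse is a direct construction: on an $s$‑perfectly‑connected graph, colour each $T_j=\{v\text{-to-}V_j\text{ edge}\}\cup\{\text{spanning tree of }G[V_j]\}$ with a single colour and every other edge privately, obtaining $m-(n-1)+s=m-n+s+1$ colours.

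The reductions of the first paragraph and the final construction are routine; the main obstacle is the equality analysis in $(3)$. The key idea is that choosing $\mathcal F$ to be a star at a minimum‑degree vertex makes the tight case rigid — it drives \emph{all} nontrivial trees through $v$, each using one edge at $v$ and one vertex of $N(v)$ — after which the remaining work is to verify that the induced partition really produces the complete‑multipartite pattern and that the unused neighbours of $v$ are universal. Parts $(1)$ and $(2)$ need only the clean forest‑charging argument above.
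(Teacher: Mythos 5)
This theorem is not proved in the paper at all: it is quoted verbatim from Caro and Yuster \cite{Caro}, so there is no in-paper argument to compare yours against. Judged on its own, your proof is correct. The waste identity $mc(G)=m-\sum_i\bigl(|V(T_i)|-2\bigr)$ for a simple extremal MC-coloring, the fact that every non-edge of $G$ lies in some nontrivial color tree, and the charging of a forest of non-edges $\mathcal{F}$ to the nontrivial trees are all sound; the two key counting steps (the $\mathcal{F}$-edges assigned to $T_i$ form a forest missing at least one vertex of $V(T_i)$ per color class met in $(1)$, respectively missing $V(T_i)\cap X\neq\emptyset$ in $(2)$, hence number at most $|V(T_i)|-2$) check out, as does the sharpness example $K_{k+1}-e$ for $k\ge 2$ (for $k=1$ the claim is vacuous for connected graphs). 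The equality analysis in $(3)$ is also correct: taking $\mathcal{F}$ to be the star of non-edges at a vertex $v$ of degree $s$, tightness forces every nontrivial tree to contain $v$ and to meet $N(v)$ in exactly one vertex, simplicity makes the trees otherwise disjoint, and the absence of nontrivial trees through the remaining neighbors of $v$ makes those vertices universal, which together yield the $s$-perfectly-connected partition; the converse coloring indeed uses $m-n+s+1$ colors, and combined with your upper bound $mc(G)\le m-n+s+1$ this settles the exceptional case. Your method is the same waste-counting philosophy that this paper itself uses in Claim 3 of the proof of Lemma \ref{lem4} (and that Caro--Yuster use in the original), but the explicit non-edge forest-charging device is a clean uniform packaging: it handles $(1)$ and $(2)$ by one lemma and makes the rigidity argument for $(3)$, which is the delicate part of the original proof, quite transparent.
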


In this paper, we will study two kinds of Erd\H{o}s-Gallai-type
problems for $mc(G)$.

\noindent\textbf{Problem A}: \ Given two positive integers $n$ and
$k$ with $1\leq k\leq {n \choose 2}$, compute the minimum integer
$f(n,k)$ such that if $|E(G)|\geq f(n,k)$, then $mc(G)\geq k$.

\noindent\textbf{Problem B}: Given two positive integers $n$ and $k$
with $1\leq k\leq {n \choose 2}$, compute the maximum integer
$g(n,k)$ such that if $|E(G)|\leq g(n,k)$, then $mc(G)\leq k$.

It is worth mentioning that the two parameters $f(n,k)$ and $g(n,k)$
are equivalent to another two parameters. Let
$t(n,k)=\min\{|E(G)|:|V(G)|=n, mc(G)\geq k\}$ and
$s(n,k)=\max\{|E(G)|:|V(G)|=n, mc(G)\leq k\}$. It is easy to see
that $t(n,k)=g(n,k-1)+1$ and $s(n,k)=f(n,k+1)-1$. This paper is devoted to
determining the exact values of $f(n,k)$ and $g(n,k)$ for all integers
$n$ and $k$ with $1\leq k\leq {n \choose 2}$; see Theorem \ref{thm3}
and Theorem \ref{thm4}.

\section{Main results}

\subsection{The result for $f(n,k)$}

We first state several lemmas, which will be used to determine the
value of $f(n,k)$.
\begin{lemma}\label{lem1}
Let $H$ be a connected graph on $n$ vertices, and $G$ a connected spanning subgraph of $H$.
If $mc(H)=m(H)-n+2$, then $mc(G)=m(G)-n+2$.
\end{lemma}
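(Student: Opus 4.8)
The plan is to show the contrapositive in spirit: starting from an extremal MC-coloring of $G$ (one using $mc(G)$ colors), I will extend it to an MC-coloring of $H$ that wastes no more colors than forced, so that $mc(H) \ge m(H) - n + 2 + (\text{colors wasted in } G)$; combined with the universal lower bound $mc(G) \ge m(G) - n + 2$ and the hypothesis $mc(H) = m(H)-n+2$, this will pin $mc(G)$ down to $m(G)-n+2$. First I would recall that an extremal MC-coloring of $G$ can be taken to be \emph{simple}, so its color classes are edge-disjoint trees $T_1,\dots,T_\ell$ (with $\ell = mc(G)$), pairwise intersecting in at most one vertex, and every pair of vertices of $G$ is joined by a monochromatic path lying inside one of these trees. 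If $G$ has $p$ nontrivial color trees, with $q$ edges in total among them, then the number of wasted colors is $w(G) := q - p = m(G) - \ell = m(G) - mc(G)$.

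Next I would build an MC-coloring of $H$. Keep the coloring of the edges of $G$ exactly as above, and give each edge of $E(H)\setminus E(G)$ its own fresh distinct color. This is clearly an MC-coloring of $H$: the monochromatic paths witnessing connectivity in $G$ are still present in $H$ (colors were not merged). The number of colors it uses is $mc(G) + (m(H)-m(G))$, and the number of colors it wastes is exactly $w(G) = m(G) - mc(G)$, since the new singleton colors are trivial. Hence $mc(H) \ge mc(G) + m(H) - m(G)$, i.e. $m(H) - mc(H) \le m(G) - mc(G)$, that is $w(H) \le w(G)$. On the other hand, by the general lower bound applied to $G$ we have $w(G) = m(G) - mc(G) \le n - 2$, and the hypothesis $mc(H) = m(H) - n + 2$ says $w(H) = n - 2$. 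Therefore $n - 2 = w(H) \le w(G) \le n - 2$, forcing $w(G) = n-2$, i.e. $mc(G) = m(G) - n + 2$, as desired.

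The step that needs the most care is verifying that the proposed coloring of $H$ really is an MC-coloring and really wastes only $w(G)$ colors; this hinges on the fact that recoloring the $E(H)\setminus E(G)$ edges with fresh colors neither destroys any existing monochromatic path nor creates new nontrivial color trees, which is immediate but should be stated. A subtle point worth noting: we do not need the extremal coloring of $G$ to be simple for the counting argument — edge-disjointness of color classes (a property of \emph{any} extremal coloring, by \cite{Caro}) suffices — but invoking simplicity costs nothing and makes the bookkeeping on wasted colors transparent. One should also record the trivial boundary remark that $G$ and $H$ have the same vertex set and both are connected, so $m(H) \ge m(G) \ge n-1$ and all the quantities above are well defined. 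No genuine obstacle is anticipated; the lemma is essentially a monotonicity observation dressed up via the "wasted colors" identity $mc = m - n + 2 \iff w = n-2 \iff w$ is maximum.
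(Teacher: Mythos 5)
Your proposal is correct and uses essentially the same argument as the paper: keep an extremal MC-coloring of $G$, give each edge of $E(H)\setminus E(G)$ a fresh color to get an MC-coloring of $H$, deduce $mc(H)\geq mc(G)+m(H)-m(G)$, and combine this with the hypothesis $mc(H)=m(H)-n+2$ and the trivial lower bound $mc(G)\geq m(G)-n+2$. The detour through simple colorings and the ``wasted colors'' bookkeeping is harmless but unnecessary; the paper's proof is the same computation stated directly.
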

\begin{proof}
It suffices to prove that $mc(G)\leq m(G)-n+2$. At first, color the
edges of $G$ with $mc(G)$ colors such that there is a monochromatic
path joining any two vertices. Then, give each edge in $E(H)-E(G)$ a
different fresh color. Hereto we get an MC-coloring of $H$ using
$mc(G)+m(H)-m(G)$ colors, which implies that $mc(G)+m(H)-m(G)\leq
mc(H)$. Therefore, $mc(G)\leq
mc(H)-m(H)+m(G)=(m(H)-n+2)-m(H)+m(G)=m(G)-n+2$.
\end{proof}

\begin{lemma}\label{lem2}
Let $n$ and $p$ be two integers with $0\leq p\leq {n-1 \choose 2}$.
Then every connected graph $G$ with $n$ vertices and $m={n \choose 2}-p$ edges satisfies
$mc(G)\geq {n \choose 2}-2p$.
\end{lemma}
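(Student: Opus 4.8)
The plan is to build, for every such $G$, an explicit MC-coloring that uses at least $m-p=\binom{n}{2}-2p$ colors (equivalently, one that wastes at most $p$ colors), and the whole construction is driven by the complement $\overline{G}$, which has exactly $p$ edges. I would write $U$ for the set of isolated vertices of $\overline{G}$ (these are exactly the vertices adjacent in $G$ to all others) and let $C_1,\dots,C_r$ be the components of $\overline{G}$ that contain at least one edge, on vertex sets $S_1,\dots,S_r$ with $n_i=|S_i|\ge 2$. Since each $C_i$ is connected, $e(C_i)\ge n_i-1$, and $\sum_i e(C_i)=p$. Two elementary observations do most of the work: (i) every non-edge of $G$ has both ends in a single $S_i$ (non-adjacency in $G$ means adjacency in $\overline{G}$, hence membership in one component of $\overline{G}$); and (ii) if $w\notin S_i$ then $w$ is adjacent in $G$ to every vertex of $S_i$, because the non-neighbours of $w$ all lie in $w$'s own $\overline{G}$-component, which is disjoint from $S_i$.

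The main step is the following construction, valid whenever we can pick for each $i$ a ``centre'' $w_i\notin S_i$ so that the stars $T^{(i)}:=\{\,w_is:s\in S_i\,\}$ (genuine subgraphs of $G$ by (ii)) are pairwise edge-disjoint. Colour the edges of each $T^{(i)}$ with a single fresh colour $c_i$, and give every remaining edge of $G$ its own fresh colour. This is an MC-coloring: any edge of $G$ is by itself a monochromatic path, and for a non-edge $\{x,y\}$ we have $x,y\in S_i$ for some $i$ by (i), so $x,w_i,y$ is a monochromatic path inside the star $T^{(i)}$. The number of colours used is $r+\bigl(m-\sum_i|E(T^{(i)})|\bigr)=m+r-\sum_i n_i$, and since $\sum_i n_i=\sum_i(n_i-1)+r\le\sum_i e(C_i)+r=p+r$, this is at least $m-p=\binom{n}{2}-2p$. (When $r=0$, i.e.\ $G=K_n$, there are no stars and this is just the rainbow colouring, giving $m=\binom{n}{2}$ colours.)

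It then remains to supply suitable centres, or else to finish off $G$ by the trivial bound $mc(G)\ge m-n+2$. If $U\neq\varnothing$, take every $w_i$ to be one fixed vertex $w^\ast\in U$; the stars are edge-disjoint because their leaf sets $S_i$ are disjoint. If $U=\varnothing$ and $r\ge 3$, take $w_i$ to be any vertex of $S_{i+1}$ (indices mod $r$); here two stars $T^{(i)},T^{(j)}$ could share an edge only if $w_i\in S_j$ and $w_j\in S_i$, which forces $j\equiv i+1$ and then $i\equiv i+2\pmod r$, impossible for $r\ge 3$. Finally, if $U=\varnothing$ and $r\le 2$, then $V(G)=S_1\cup\cdots\cup S_r$, so $p=\sum_i e(C_i)\ge\sum_i(n_i-1)=n-r\ge n-2$, whence $mc(G)\ge m-n+2\ge m-p=\binom{n}{2}-2p$ already. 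Since for $U=\varnothing$ all edge-bearing components are all the components, these cases are exhaustive. The step needing genuine care is the edge-disjointness bookkeeping for the stars, especially the cyclic choice of centres when $G$ has no universal vertex, together with checking that the degenerate configurations ($r=0$ and very small $n$) are covered; the rest is a direct computation.
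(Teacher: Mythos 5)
Your proposal is correct and follows essentially the same route as the paper: both build stars out of the components of $\overline{G}$ (a star construction when $G$ has a universal vertex, and the identical cyclic choice of centers across the components when there are at least three of them), fall back on the trivial bound $mc(G)\ge m-n+2$ in the remaining situation, and count wasted colors via $\sum_i(n_i-1)\le p$. The only organizational difference is the case split: the paper distinguishes $p\ge n-2$ from $p\le n-3$ and treats the two-component case with a double star, whereas you avoid the double star by observing that $U=\varnothing$ together with $r\le 2$ already forces $p\ge n-2$, so the trivial bound suffices there.
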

\begin{proof}
Proving that $mc(G)\geq {n \choose 2}-2p$ amounts to finding an MC-coloring of $G$
which wastes at most $p$ colors.
We distinguish the following two cases.

Case 1: $n-2\leq p\leq {n-1 \choose 2}$.

By the lower bound, we have $mc(G)\geq m-n+2\geq m-p={n \choose 2}-2p$.

Case 2: $0\leq p\leq n-3$.

Now consider the graph $\widetilde{G}$, which is obtained from
$\overline{G}$ by deleting all the isolated vertices. If
$n(\widetilde{G})\leq p+1(\leq n-2)$, then we can find at least two
vertices $v_1$, $v_2$ of degree $n-1$ in $G$. Take a star $S$ with
$E(S)=\{v_1v:v\in \widetilde{G}\}$. We give all the edges in $S$ one
color, and every other edge a different fresh color. Obviously, it
is an MC-coloring of $G$ which wastes at most $p$ colors. If
$n(\widetilde{G})\geq p+2$, say $n(\widetilde{G})=p+t$ ($t\geq2$),
then $\widetilde{G}$ has at least $t$ components (since
$m(\widetilde{G})=p$). If $\widetilde{G}$ has exactly two components
$C_1$ and $C_2$, then $t=2$, $n(C_j)\geq2$, and all the missing
edges of $G$ lie in $C_j$ for $j\in\{1,2\}$. Take a double star $S'$
as follows: one vertex from $C_1$ is adjacent to all the vertices in
$C_2$, and one vertex from $C_2$ is adjacent to all the
vertices in $C_1$. Give all the edges in $S'$ one color, and every
other edge in $G$ a different fresh color. Then we obtain an
MC-coloring of $G$, which wastes $p$ colors (since $S'$ has exactly
$p+1$ edges). If $\widetilde{G}$ has $\ell\geq 3$ components
$C_1,C_2,\ldots,C_{\ell}$, then $\ell\geq t$, $n(C_j)\geq2$, and all
the missing edges of $G$ lie in $C_j$ for $j\in\{1,2,\ldots,\ell\}$.
One vertex from $C_j$ is adjacent to every vertex in $C_{j+1}$ by a
fresh color $i_j$ for $j\in\{1,2,\ldots,\ell\}$ (cyclically, that is
a vertex from $C_{\ell}$ which is adjacent to every vertex in $C_1$
by the color $i_{\ell}$). Each other edge in $G$ receives a
different fresh color. Obviously, it is an MC-coloring of $G$, and
the number of wasted colors is
$\sum_{j=1}^{\ell}(n(C_j)-1)=p+t-\ell\leq p$.
\end{proof}
As an immediate consequence, we obtain the following corollary.
\begin{corollary}\label{cor1}
Let $n,\ p,\ k$ be three integers with $0\leq p\leq {n
\choose 2}/2$ and $k={n \choose 2}-2p$. Then $f(n,k)\leq {n \choose
2}-p$.
\end{corollary}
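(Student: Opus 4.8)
The plan is to derive Corollary~\ref{cor1} directly from Lemma~\ref{lem2} by translating the statement about all connected graphs into a bound on the threshold $f(n,k)$. Recall that $f(n,k)$ is the minimum integer such that $|E(G)|\geq f(n,k)$ forces $mc(G)\geq k$; hence to prove $f(n,k)\leq {n\choose 2}-p$ it suffices to show that \emph{every} connected graph $G$ on $n$ vertices with $m(G)\geq {n\choose 2}-p$ satisfies $mc(G)\geq k = {n\choose 2}-2p$.

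First I would observe that we only need to handle connected $G$: if $G$ has $|E(G)|\geq {n\choose 2}-p$ with $p\leq {n\choose 2}/2$, then $|E(G)|\geq {n\choose 2}/2$, and such a dense graph on $n$ vertices is automatically connected (its complement has fewer than $n-1$ edges, so the complement cannot be spanning-connected in a way that disconnects $G$ — more simply, a graph on $n$ vertices with more than ${n-1\choose 2}$ edges is connected, and ${n\choose 2}-p \ge {n\choose 2}/2 > {n-1 \choose 2}$ once $n$ is not too small; the small cases can be checked directly). So assume $G$ is connected with $m(G) = {n\choose 2}-p'$ for some $p' \le p$. The constraint $p' \le {n-1\choose 2}$ needed to invoke Lemma~\ref{lem2} holds because $G$ is connected, so $m(G)\geq n-1$, i.e. $p' \le {n\choose 2}-(n-1) = {n-1\choose 2}$.

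Next I would apply Lemma~\ref{lem2} with parameter $p'$: it gives $mc(G)\geq {n\choose 2}-2p'$. Since $p'\leq p$, we get $mc(G)\geq {n\choose 2}-2p' \geq {n\choose 2}-2p = k$. This establishes that $m(G)\geq {n\choose 2}-p$ implies $mc(G)\geq k$ for every graph $G$ on $n$ vertices, which is exactly the defining property that makes ${n\choose 2}-p$ an admissible value for $f(n,k)$; by minimality, $f(n,k)\leq {n\choose 2}-p$.

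The only genuine subtlety — and the step I would be most careful about — is the reduction to the connected case, i.e. confirming that the edge bound ${n\choose 2}-p$ with $p\leq {n\choose 2}/2$ really forces connectivity (or more precisely, that any disconnected graph meeting the edge bound can be ignored because the definition of $f$ quantifies over all graphs, yet the conclusion $mc(G)\geq k$ must still hold). Since $mc$ is typically only defined for connected graphs in this paper, I expect the authors implicitly restrict Problems~A and~B to connected graphs, in which case this reduction is vacuous and the corollary follows in two lines from Lemma~\ref{lem2} together with the monotonicity inequality $p'\leq p \Rightarrow {n\choose 2}-2p' \geq {n\choose 2}-2p$. Either way, no new ideas beyond Lemma~\ref{lem2} are required; this is purely a matter of bookkeeping with the definition of $f(n,k)$.
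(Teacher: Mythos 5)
Your proof is correct and is essentially the paper's own argument: the corollary is stated there as an immediate consequence of Lemma~\ref{lem2}, obtained exactly as you do by writing $m=\binom{n}{2}-p'$ with $p'\le p$ (and $p'\le\binom{n-1}{2}$ by connectivity) and using that $\binom{n}{2}-2p'\ge\binom{n}{2}-2p=k$. One caution on your side remark: the inequality $\binom{n}{2}/2>\binom{n-1}{2}$ you invoke to force connectivity is actually false for $n\ge 4$ (there $\binom{n}{2}/2\le\binom{n-1}{2}$), but this is harmless because, as you correctly suspect, Problems A and B are implicitly restricted to connected graphs, where $mc$ is defined, so no connectivity reduction is needed.
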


\begin{lemma}[\cite{Caro}]\label{lem3}
If $G$ is a complete $r$-partite graph, then $mc(G)=m-n+r$.
\end{lemma}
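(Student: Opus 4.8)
The plan is to prove the two inequalities $mc(G)\le m-n+r$ and $mc(G)\ge m-n+r$ in turn. Throughout write $V_1,\dots,V_r$ for the parts of $G$ and $n_i=|V_i|$, so that $n=\sum_{i=1}^r n_i$ and $n-r=\sum_{i=1}^r(n_i-1)$; recall also that two vertices of a complete multipartite graph are non-adjacent precisely when they lie in the same part.

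For the upper bound I would simply observe that $\chi(G)=r$: assigning colour $i$ to every vertex of $V_i$ is a proper vertex colouring, while any transversal (one vertex from each part) spans a copy of $K_r$, so $\chi(G)\ge r$. Plugging this into Theorem~\ref{thm2}(1) gives $mc(G)\le m-n+\chi(G)=m-n+r$, and nothing more is needed here.

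The real content is the matching lower bound, for which it suffices to construct an MC-colouring of $G$ wasting at most $n-r$ colours. If $r\le 2$ this is already the universal lower bound $mc(G)\ge m-n+2$ noted in the introduction (and the case $r=1$, where $G=K_1$, is trivial), so I may assume $r\ge 3$. Fix a transversal $u_i\in V_i$ and read indices cyclically modulo $r$. For each $i$ colour with one new colour $c_i$ all the $n_i$ edges joining $u_{i+1}$ to the vertices of $V_i$, and give every remaining edge of $G$ its own fresh colour. The colour class $c_i$ is a star on $n_i$ edges centred at $u_{i+1}$, hence wastes $n_i-1$ colours, so --- provided the $r$ stars are pairwise edge-disjoint --- the total waste is $\sum_{i=1}^r(n_i-1)=n-r$, which yields $mc(G)\ge m-n+r$.

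I expect the edge-disjointness check to be the only real obstacle, and it is exactly where $r\ge 3$ is used: a common edge of the $i$-th and $j$-th stars ($i\neq j$) would have to be incident with both $u_{i+1}$ and $u_{j+1}$, hence be the edge $u_{i+1}u_{j+1}$; membership in the $i$-th star forces $u_{j+1}\in V_i$ and membership in the $j$-th star forces $u_{i+1}\in V_j$, and together these give $i+2\equiv i\pmod r$, which is impossible for $r\ge 3$. It then remains to confirm that the colouring is an MC-colouring, which is routine: adjacent vertices are joined by the (monochromatic) edge between them, and two non-adjacent vertices lie in a common part $V_i$, so they are joined by the monochromatic path of length two through $u_{i+1}$ using colour $c_i$. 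Combining the two bounds gives $mc(G)=m-n+r$.
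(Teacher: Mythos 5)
Your proof is correct, but it takes a different route from the one in the paper (which reproduces Caro and Yuster's argument for their Theorem~7). For the upper bound the paper works structurally with a simple extremal MC-coloring: it shows no nontrivial color tree can meet three or more vertex classes, reduces every nontrivial tree to a star or double star between two classes, and then counts, per class, the wasted colors needed to monochromatically connect the nonadjacent pairs inside that class. You instead get the upper bound in one line from Theorem~\ref{thm2}(1) together with $\chi(G)=r$ for a complete $r$-partite graph; this is legitimate here since Theorem~\ref{thm2} is quoted as a known result, though it outsources the substantive half of the lemma to that cited bound (and note the shortcut is special to this lemma: for $G_n^t$ in Lemma~\ref{lem4} one has $\chi(G_n^t)=n-t$, so the chromatic bound is far too weak, which is exactly why the paper carries the structural machinery along). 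Your lower bound, the cyclic family of edge-disjoint stars $u_{i+1}\to V_i$ through a transversal, wasting $\sum_i(n_i-1)=n-r$ colors, is in the same spirit as the constructions the paper uses in Lemma~\ref{lem2} and inside the proof of Lemma~\ref{lem4}, and your edge-disjointness check (which is where $r\ge 3$ enters, with $r\le 2$ handled by the universal bound $mc(G)\ge m-n+2$) is sound. So the net trade-off is brevity and reuse of a black-box bound on your side versus a self-contained structural argument, reusable for Lemma~\ref{lem4}, on the paper's side.
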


Given two positive integers $n$ and $t$ with $3\leq t\leq n$, let
$G_n^t$ be the graph defined as follows: partition the vertex set of
the complete graph $K_n$ into $t$ vertex classes
$V_1,V_2,\ldots,V_t$, where $\left||V_j|-|V_r|\right|\leq 1$ for
$1\leq j\neq r\leq t$; select a vertex $v_j^*$ from $V_j$ ($1\leq
j\leq t$), and delete all the edges joining $v_j^*$ to another
vertex in $V_j$. The remaining edges in $V_j$ ($1\leq j\leq t$) are
called \emph{internal edges}. Clearly, $m(G_n^t)={n \choose 2}-n+t$.
Next we will show that $mc(G_n^t)={n \choose 2}-2n+2t$. The proof is
similar to that of Lemma \ref{lem3}. We begin with an easy
observation.

\begin{observation}\label{obs1}
Let $f$ be an extremal MC-coloring of a connected graph $G$. Then
every nontrivial color tree in $f$ contains at least one pair of
nonadjacent vertices.
\end{observation}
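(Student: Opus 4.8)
The plan is to argue by contradiction: suppose $f$ is an extremal MC-coloring and some nontrivial color $i$ has the property that $G[V(T_i)]$ is a complete graph; from this I will construct an MC-coloring of $G$ using strictly more colors, contradicting the maximality in the definition of $mc(G)$.

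First I would record the relevant structure. Since $f$ is extremal, every color class is a tree (as recalled in the introduction), so $T_i$ really is a tree; as $i$ is nontrivial it has $t\geq 2$ edges, hence $|V(T_i)|=t+1\geq 3$. The key point is that a monochromatic path of color $i$ must lie entirely inside $T_i$, so color $i$ is ``useful'' only for joining pairs of vertices that both lie in $V(T_i)$. Moreover, under the assumption that $G[V(T_i)]$ is complete, for any two vertices $u,w\in V(T_i)$ the edge $uw$ belongs to $E(G)$, and a single edge is a monochromatic path regardless of how it is colored.

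Next I would perform the recoloring. Leave the color of every edge outside $T_i$ unchanged, and give each of the $t$ edges of $T_i$ its own new color, all $t$ of these being pairwise distinct and not occurring elsewhere. To see this is still an MC-coloring, take any two vertices $x,y$ of $G$; in $f$ they were joined by some monochromatic path $P$. If $P$ avoids color $i$, it survives unchanged. If $P$ uses color $i$, then $x,y\in V(T_i)$, so the edge $xy\in E(G)$ is a monochromatic path joining $x$ and $y$ in the new coloring. Hence every pair of vertices remains monochromatically connected. Counting colors: the new coloring uses all colors of $f$ except $i$, together with $t$ fresh ones, i.e.\ at least $mc(G)-1+t\geq mc(G)+1$ colors, contradicting the definition of $mc(G)$. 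Therefore no nontrivial color tree $T_i$ can have $G[V(T_i)]$ complete, which is exactly the assertion.

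I do not expect any real obstacle; the only step that needs a moment of care is verifying that dismantling color $i$ does not disconnect pairs lying outside $V(T_i)$, and this is immediate once one observes that every monochromatic path of color $i$ has both endpoints in $V(T_i)$, combined with the fact that completeness of $G[V(T_i)]$ makes monochromatic connectivity within $V(T_i)$ automatic through length-one paths.
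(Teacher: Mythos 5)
Your proof is correct and follows essentially the same argument as the paper: assume the vertices of a nontrivial color tree are pairwise adjacent, recolor its edges with fresh colors (the paper keeps color $i$ on one edge, you refresh all of them — either way gaining $t-1\geq 1$ colors), and note that adjacency of all pairs in $V(T_i)$ keeps the coloring an MC-coloring, contradicting extremality. Your verification that every monochromatic path of color $i$ lies inside $T_i$ is a careful spelling-out of what the paper calls obvious.
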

\begin{proof}
Suppose that $T_i$ is a nontrivial color tree, in which all the
pairs of vertices are adjacent in $G$. Then we can adjust the
coloring of $T_i$. Color one edge of $T_i$ with color $i$, and give
each other edge of $T_i$ a different fresh color. Obviously, the new
coloring is still an MC-coloring, but uses more colors than $f$, a
contradiction.
\end{proof}

\begin{lemma}\label{lem4}
$mc(G_n^t)={n \choose 2}-2n+2t$.
\end{lemma}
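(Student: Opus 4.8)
The plan is to prove the two inequalities $mc(G_n^t)\ge\binom{n}{2}-2n+2t$ and $mc(G_n^t)\le\binom{n}{2}-2n+2t$ separately, in the spirit of Lemma~\ref{lem3}. Since $m(G_n^t)=\binom{n}{2}-n+t$, the target value is exactly $m(G_n^t)-n+t$, so the lower bound amounts to exhibiting an MC-coloring that wastes $n-t$ colors, while the upper bound amounts to showing that no MC-coloring of $G_n^t$ wastes fewer than $n-t$ colors. The first observation I would record is that the only pairs of non-adjacent vertices of $G_n^t$ are the pairs $\{v_j^*,u\}$ with $1\le j\le t$ and $u\in V_j\setminus\{v_j^*\}$, and that there are precisely $\sum_{j=1}^t(|V_j|-1)=n-t$ of them.

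For the lower bound I would read the indices cyclically (index $t+1$ meaning $1$) and, for each $j$, take the star $S_j$ centered at $v_{j+1}^*$ with edge set $\{v_{j+1}^*w:w\in V_j\}$; as $v_{j+1}^*\notin V_j$ all these edges are present in $G_n^t$, so $|E(S_j)|=|V_j|$. Using $t\ge3$ one checks directly that $S_1,\dots,S_t$ are pairwise edge-disjoint (for $t=2$ the stars $S_1,S_2$ would share the edge $v_1^*v_2^*$, which is exactly why the construction needs $t\ge3$). Give all edges of $S_j$ a common new color $c_j$ for $j=1,\dots,t$, and each remaining edge its own fresh color; this wastes $\sum_{j=1}^t(|V_j|-1)=n-t$ colors. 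It remains to check that this is an MC-coloring: two vertices in distinct parts are adjacent; two non-special vertices of the same part are joined by an internal edge; and $v_j^*$ is joined to each $u\in V_j\setminus\{v_j^*\}$ by the monochromatic path $v_j^*\,v_{j+1}^*\,u$, both of whose edges lie in $S_j$. Hence $mc(G_n^t)\ge m(G_n^t)-(n-t)=\binom{n}{2}-2n+2t$.

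For the upper bound I would fix an extremal MC-coloring and use the fact (quoted in the introduction) that each of its color classes induces a tree; let $T_1,\dots,T_q$ be the nontrivial ones, so the number of wasted colors is $\sum_{i=1}^q(|E(T_i)|-1)$. For a nontrivial color tree $T$ let $r(T)$ be the number of non-adjacent pairs $\{v_j^*,u\}$ (as above) with both $v_j^*\in V(T)$ and $u\in V(T)$. Since each of the $n-t$ non-adjacent pairs must be joined by a monochromatic path, which has length $\ge2$ and hence lies in some nontrivial color tree containing both its endpoints, we get $\sum_{i=1}^q r(T_i)\ge n-t$. The crux is then the inequality $|E(T)|-1\ge r(T)$ for every nontrivial color tree $T$: setting $J=\{j:v_j^*\in V(T)\}$ and $a_j=|V(T)\cap(V_j\setminus\{v_j^*\})|$ for $j\in J$, we have $r(T)=\sum_{j\in J}a_j=:a$ and $|V(T)|\ge|J|+a$, so $|V(T)|\ge a+2$ whenever $|J|\ge2$; when $|J|=1$ and $a\ge1$, the unique vertex $v_j^*\in V(T)$ has all of its $T$-neighbors outside $V_j$ (since in $G_n^t$ it is non-adjacent to every vertex of $V_j\setminus\{v_j^*\}$), which forces at least one further vertex into $V(T)$ and again gives $|V(T)|\ge a+2$; the remaining cases $|J|=0$ or $a=0$ are immediate. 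As $|E(T)|-1=|V(T)|-2$, this yields $|E(T)|-1\ge r(T)$, and summing over $i$ gives total waste $\ge n-t$, i.e.\ $mc(G_n^t)\le\binom{n}{2}-2n+2t$.

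I expect the only real obstacle to be the inequality $|E(T)|-1\ge r(T)$, and within it the case where $T$ contains exactly one special vertex $v_j^*$; there one must exploit the defining structure of $G_n^t$ — that $v_j^*$ is non-adjacent to the entire set $V_j\setminus\{v_j^*\}$ — to locate the additional vertex of $T$. Everything else (the cyclic-star construction, its edge-disjointness, and the counting $\sum_i r(T_i)\ge n-t$) is routine bookkeeping, though one should keep an eye on degenerate parts $V_j$ that are singletons (they contribute empty stars and zero waste, consistent with $G_n^n=K_n$ and $mc(K_n)=\binom{n}{2}$).
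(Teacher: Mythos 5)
Your proof is correct, but the upper-bound half follows a genuinely different route from the paper's. For the lower bound the difference is cosmetic: the paper invokes Lemma~\ref{lem3} (extend an extremal coloring of the spanning complete $t$-partite subgraph by fresh colors), while you build the cyclic stars $S_j$ centered at $v_{j+1}^*$ explicitly; both waste exactly $n-t$ colors, and your edge-disjointness check correctly uses $t\geq 3$, which is part of the definition of $G_n^t$. For the upper bound, the paper works with a \emph{simple} extremal MC-coloring and proceeds through three structural claims (every nontrivial color tree meets exactly two classes; it can be reshaped into a star or double star with no internal edges; an orientation count shows at least $|V_j|-1$ oriented edges enter each $V_j$), whereas you bypass simplicity and the restructuring entirely: you only use the tree property of color classes plus the key fact that the complement of $G_n^t$ is a disjoint union of stars centered at the $v_j^*$, so every nontrivial color tree $T$ satisfies $r(T)\leq |V(T)|-2=|E(T)|-1$, and summing over trees (each of the $n-t$ nonadjacent pairs being served by some nontrivial tree) gives waste at least $n-t$. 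Your case analysis for $|J|=1$, $a\geq 1$ — that $v_j^*$ must have a $T$-neighbor outside $V_j$, supplying the extra vertex — is exactly the point where the structure of $G_n^t$ is needed, and it is handled correctly. The trade-off: the paper's argument yields structural information about extremal colorings of $G_n^t$ (useful as a template, e.g.\ it mirrors the proof of Lemma~\ref{lem3}), while your counting argument is shorter, avoids the simple-coloring machinery, and isolates the one inequality that actually drives the bound. A trivial nitpick: a singleton class $V_j$ contributes a one-edge (not empty) star $S_j$ in your construction, still with zero waste, so nothing breaks.
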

\begin{proof}
Since $G_n^t$ contains a spanning complete $t$-partite graph, it
follows from Lemma \ref{lem3} that $mc(G_n^t)\geq m(G_n^t)-n+t={n
\choose 2}-2n+2t$. To prove the other direction, we need the
following three claims.

\noindent \textbf{Claim 1}: In any simple extremal MC-coloring $f$
of $G_n^t$, each nontrivial color tree intersects exactly two vertex
classes.\\

Suppose that a nontrivial color tree $T_i$ intersects $s\geq3$
vertex classes, say $V_1,V_2,\ldots,V_s$. Let $P_j=V(T_i)\cap V_j$
and $|P_j|=p_j$ for $1\leq j\leq s$. Denote by $x$ the number of
internal edges in $G[\bigcup_{j=1}^{s}P_j]$ (the subgraph of $G_n^t$
induced by $\bigcup_{j=1}^{s}P_j$). Then $G[\bigcup_{j=1}^{s}P_j]$
has $\sum_{1\leq j<r\leq s}p_jp_r+x$ edges in total. Observe that
$T_i$ has $\sum_{j=1}^{s}p_j-1$ edges, and since the coloring $f$ is
simple, each other edge in $G[\bigcup_{j=1}^{s}P_j]$ forms a trivial
color tree. Thus we get that $G[\bigcup_{j=1}^{s}P_j]$ contains
$\sum_{1\leq j<r\leq s}p_jp_r-\sum_{j=1}^{s}p_j+x+2$ colors. Now we
adjust the coloring of $G[\bigcup_{j=1}^{s}P_j]$. One vertex from
$P_j$ is adjacent to every vertex in $P_{j+1}$ by a fresh color
$i_j$ for $j\in\{1,2,\ldots,s\}$ (cyclically, that is a vertex from
$P_{s}$ which is adjacent to every vertex in $P_1$ by the color
$i_{s}$). Each other edge in $G[\bigcup_{j=1}^{s}P_j]$ receives a
different fresh color. Obviously, the new coloring is still an
MC-coloring, but now it uses $\sum_{1\leq j<r\leq
s}p_jp_r-\sum_{j=1}^{s}p_j+x+s$, contradicting to the fact that $f$
is extremal. Suppose that a nontrivial color tree $T_i$ intersects
only one vertex class, say $V_1$. Clearly, $v_1^*\notin V(T_i)$,
that is $T_i$ contains no pairs of nonadjacent vertices, a
contradiction. Thus each nontrivial color tree intersects exactly
two vertex classes.

\noindent \textbf{Claim 2}: There exists a simple extremal
MC-coloring of $G_n^t$ such that each nontrivial color tree is a
star or a double star, which does not contain any internal edges.

Let $f$ be a simple extremal MC-coloring of $G_n^t$ and $T_i$ a
nontrivial color tree in $f$. By Claim 1, we may assume that $T_i$
intersects $V_1$ and $V_2$ with $1\leq p_1\leq p_2$. Since $f$ is
simple, any edge in $G[P_1\bigcup P_2]$ but not in $T_i$ must be a
trivial color tree. Thus $G[P_1\bigcup P_2]$ contains
$p_1p_2-p_1-p_2+x+2$ colors. We distinguish the following two cases
(the case $p_1=p_2=1$ is excluded, since then $T_i$ has two
vertices, contradicting to the fact that $T_i$ is nontrivial).

{\bf Case 1:} $p_1=1$ and $p_2\geq2$

If $T_i$ is the star which consists of all the edges connecting
$P_1$ and $P_2$, then we are done. Otherwise, we replace $T_i$ with
this star, and color each other edge in $G[P_1\bigcup P_2]$ with a
different fresh color. Clearly, this change maintains an MC-coloring
without affecting the total number of colors. In other words, the
new coloring is still a simple extremal MC-coloring. Moreover, now
the nontrivial color tree in $G[P_1\bigcup P_2]$ is a star
containing no internal edges.

{\bf Case 2:} $2\leq p_1\leq p_2$.

If $T_i$ is a double star which consists of all the edges connecting
a certain vertex from $P_1$ and $P_2$, and all the edges connecting
a certain vertex from $P_2$ and $P_1$, then we are done. Otherwise,
we replace $T_i$ with one double star as stated above, and color
each other edge in $G[P_1\bigcup P_2]$ with a different fresh color.
Clearly, this change maintains an MC-coloring without affecting the
total number of colors. In other words, the new coloring is still a
simple extremal MC-coloring. Moreover, now the nontrivial color tree
in $G[P_1\bigcup P_2]$ is a double star containing no internal
edges.
\begin{figure}[ht]
\begin{center}
\includegraphics[width=10cm]{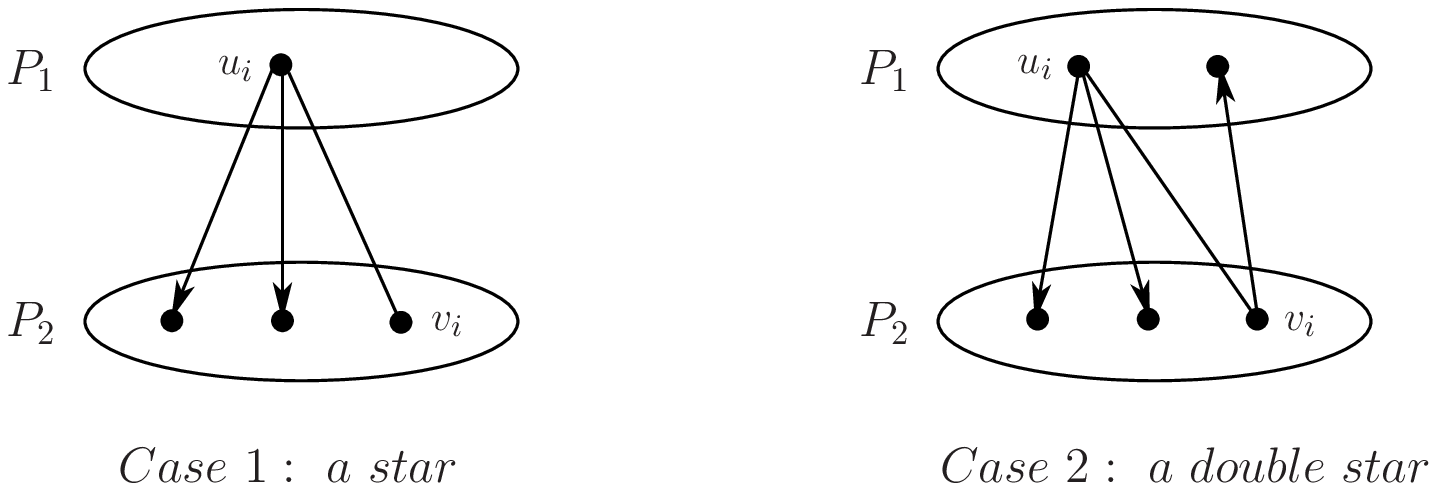}\\
Figure 1: The illustration of Claim 2.
\end{center}
\end{figure}

Now we may assume that every nontrivial color tree $T_i$ in $f$ is a
star or a double star containing no internal edges. In fact, the
stars can be viewed as degenerated double stars, by letting an
arbitrary leaf perform the role of the other center of a double
star. So we assume that all nontrivial color trees in $f$ are double
stars (some are possibly degenerated). For a nontrivial color tree
$T_i$, let $u_i$ and $v_i$ denote the two centers. Orient all the
edges of $T_i$ incident with $u_i$ other than $u_iv_i$ (if there are
any) as going from $u_i$ toward the leaves. Similarly, orient all
the edges of $T_i$ incident with $v_i$ other than $u_iv_i$ (if there
are any) as going from $v_i$ toward the leaves. Keep $u_iv_i$ as
unoriented. Since $T_i$ contains no internal edges, all of the edges
oriented from $u_i$ (if there are any) point to the same vertex
class (the vertex class of $v_i$), and all of the edges oriented
from $v_i$ (if there are any) point to the same vertex class (the
vertex class of $u_i$). Observe that the number of wasted colors of
$T_i$ is equal to the number of oriented edges in $T_i$.

\noindent \textbf{Claim 3}: For each $j$ ($1\leq j\leq t$), the
number of edges entering $V_j$ is at least $|V_j|-1$.

In order to solve the monochromatic connectedness of $|V_j|-1$ pairs
of nonadjacent vertices in $V_j$, there are double stars
$T_1,T_2,\ldots,T_{\ell}$ (some are possibly degenerated). Let $e_i
(1\leq i\leq \ell)$ denote the number of edges entering $V_j$ in
$T_i$. From Observation \ref{obs1}, it follows that $T_i (1\leq
i\leq \ell)$ must contain the vertex $v_j^*$. So $T_i (2\leq i\leq
\ell)$ covers at most $e_i$ vertices in $V_j$ but not in
$\bigcup_{q=1}^{i-1}T_q$. Thus we have
$(e_1+1)+\sum_{i=2}^{\ell}e_i\geq |V_j|$, that is,
$\sum_{i=1}^{\ell}e_i\geq |V_j|-1$.

Note that the total number of wasted colors in $f$ is equal to the
number of oriented edges in $G_n^t$. It follows from Claim 3 that
this number is at least $\sum_{j=1}^{t}(|V_j|-1)=n-t$. So we have
$mc(G)\leq ({n \choose 2}-n+t)-(n-t)={n \choose 2}-2n+2t$.
\end{proof}

We are now in the position to give the exact value of $f(n,k)$.
\begin{theorem}\label{thm3}
Given two positive integers $n$ and $k$ with $1\leq k\leq {n \choose 2}$,
\begin{displaymath}
f(n,k) = \left\{ \begin{array}{ll}
n+k-2 & \textrm{\ \ \ \ if\ $1\leq k\leq {n \choose 2}-2n+4$\ \ \ \ \ \ (1)}\\
{n \choose 2}+\left\lceil\frac{k-{n \choose 2}}{2}\right\rceil & \textrm{\ \ \ \
if\ ${n \choose 2}-2n+5\leq k\leq {n \choose 2}$\ \ \ (2)}
\end{array} \right.
\end{displaymath}
\end{theorem}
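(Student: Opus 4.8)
The proof splits along the two ranges, and in each I would prove the two matching inequalities separately; throughout $G$ denotes a connected graph on $n$ vertices (the only setting in which $mc$ is defined), and I would first dispose of $n=3$ by inspection so that Theorem~\ref{thm1}, which requires $n>3$, is available for all later uses. Recall that to show $f(n,k)$ equals a given value $N$ it suffices to (i) verify the implication ``$m(G)\ge N\Rightarrow mc(G)\ge k$'' and (ii) exhibit one connected graph on $n$ vertices with exactly $N-1$ edges and $mc\le k-1$. For the range $1\le k\le\binom{n}{2}-2n+4$ with target $N=n+k-2$, part (i) is immediate from the universal lower bound $mc(G)\ge m(G)-n+2$ recalled in the introduction, since $m(G)\ge n+k-2$ then forces $mc(G)\ge k$. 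For part (ii) (taking $k\ge 2$; the case $k=1$ gives the trivial threshold $f(n,1)=n-1$), I would take any connected graph $H$ on $n-1$ vertices with $m(H)=n+k-4$ edges --- this number lies in the feasible interval $[\,n-2,\binom{n-1}{2}\,]$ precisely because $2\le k\le\binom{n}{2}-2n+4$ --- and attach a pendant vertex to obtain $G$ with $m(G)=n+k-3$ and a cut vertex; Theorem~\ref{thm1}(e) then gives $mc(G)=m(G)-n+2=k-1<k$, so $f(n,k)\ge n+k-2$.

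For the range $\binom{n}{2}-2n+5\le k\le\binom{n}{2}$, write $q=\binom{n}{2}-k\in\{0,1,\dots,2n-5\}$, so that the target is $N=\binom{n}{2}-\lfloor q/2\rfloor$. Part (i) follows at once from Lemma~\ref{lem2} applied with $p=\lfloor q/2\rfloor$ (which satisfies $0\le p\le n-3\le\binom{n-1}{2}$): if $m(G)\ge\binom{n}{2}-\lfloor q/2\rfloor$, then $mc(G)\ge\binom{n}{2}-2\lfloor q/2\rfloor\ge\binom{n}{2}-q=k$. For part (ii) I need, for each $q$, a connected graph with exactly $\binom{n}{2}-\lfloor q/2\rfloor-1$ edges and $mc\le\binom{n}{2}-q-1$. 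Whenever $\lfloor q/2\rfloor\le n-4$ (that is, $q\le 2n-8$ if $q$ is even and $q\le 2n-7$ if $q$ is odd), the graph $G_n^t$ with $t=n-\lfloor q/2\rfloor-1\ge 3$ works: Lemma~\ref{lem4} gives $m(G_n^t)=\binom{n}{2}-\lfloor q/2\rfloor-1$ and $mc(G_n^t)=\binom{n}{2}-2\lfloor q/2\rfloor-2$, and $\binom{n}{2}-2\lfloor q/2\rfloor-2\le\binom{n}{2}-q-1$ since $q\le 2\lfloor q/2\rfloor+1$. The only values of $q$ not covered this way are $q=2n-6$ and $q=2n-5$, where $\lfloor q/2\rfloor=n-3$ and the required edge count is $\binom{n}{2}-n+2$; here I would take $G=K_{n-1}$ with one pendant vertex attached, which has $\binom{n-1}{2}+1=\binom{n}{2}-n+2$ edges and a cut vertex, so that $mc(G)=m(G)-n+2=\binom{n}{2}-2n+4$ by Theorem~\ref{thm1}(e), and one checks $\binom{n}{2}-2n+4\le\binom{n}{2}-q-1$ for both $q=2n-6$ and $q=2n-5$.

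Since Lemma~\ref{lem2}, Lemma~\ref{lem4} and Theorem~\ref{thm1} already carry all the real weight, the theorem is essentially an assembly of these ingredients, and I do not expect any single deep step. The one place demanding genuine care is the seam between the two ranges: I must make sure that the two families of extremal graphs --- the $G_n^t$ with $t\ge 3$, together with $K_{n-1}$ plus a pendant for the two leftover values $q\in\{2n-6,2n-5\}$ --- realize every needed edge count with no gap, and that the ceiling in the stated formula for $f(n,k)$ is faithfully tracked by the parity bookkeeping of $q$ (equivalently, that $\lfloor q/2\rfloor$ is the correct index in Lemma~\ref{lem2} and in the choice of $t$). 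Disposing of the small case $n=3$ and re-confirming the feasibility ranges for $H$ and for $G_n^t$ is, I expect, the only obstacle of any substance.
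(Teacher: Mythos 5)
Your proposal is correct, and its skeleton coincides with the paper's: the universal bound $mc(G)\geq m-n+2$ gives the upper estimate on $f(n,k)$ in range (1), Lemma~\ref{lem2} (the paper packages it as Corollary~\ref{cor1}) gives the upper estimate in range (2), and the graphs $G_n^t$ with Lemma~\ref{lem4} supply the extremal examples for most of range (2); your parity bookkeeping with $q=\binom{n}{2}-k$ and $\lfloor q/2\rfloor$, including the feasibility checks $t=n-\lfloor q/2\rfloor-1\geq 3$ and $q\leq 2\lfloor q/2\rfloor+1$, matches the paper's parametrization $k\in\{\binom{n}{2}-2n+2t+1,\binom{n}{2}-2n+2t+2\}$. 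Where you genuinely diverge is in the extremal constructions at two places, and in both you are simpler. For range (1) the paper builds a specific diameter-$3$ graph $H$ on $\binom{n}{2}-n+1$ edges (Theorem~\ref{thm1}(d)) and then invokes Lemma~\ref{lem1} to transfer $mc=m-n+2$ to connected spanning subgraphs with exactly $n+k-3$ edges, whereas you attach a pendant vertex to an arbitrary connected graph on $n-1$ vertices with $n+k-4$ edges and apply Theorem~\ref{thm1}(e) directly; this bypasses Lemma~\ref{lem1} entirely. For the bottom of range (2) (the paper's $t=2$ case, your $q\in\{2n-6,2n-5\}$), the paper distinguishes $n=3,4$ ($P_3$, $C_4$) from $n\geq 5$, where it constructs a graph with $\delta=2$ that is not $2$-perfectly-connected and appeals to Theorem~\ref{thm2}(3); you instead use $K_{n-1}$ plus a pendant vertex, which has the right edge count $\binom{n}{2}-n+2$ and $mc=\binom{n}{2}-2n+4$ again by Theorem~\ref{thm1}(e), needing only the separate inspection of $n=3$ (for $n=4$ Theorem~\ref{thm1} already applies). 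The trade-off: the paper's route exercises Lemma~\ref{lem1} and Theorem~\ref{thm2}(3), while yours leans uniformly on the cut-vertex criterion and is shorter; your edge-count and inequality verifications ($n+k-4\leq\binom{n-1}{2}$, $mc(G_n^t)\leq k-1$, and the boundary cases) all check out, so I see no gap.
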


\begin{proof}
Let $G$ be a connected graph with $n$ vertices and $m$ edges.
Clearly, $f(n,1)=n-1$, so the assertion holds for $k=1$. If $2\leq
k\leq {n \choose 2}-2n+4$, by the lower bound we know that if $m\geq
n+k-2$, then $mc(G)\geq k$, which implies $f(n,k)\leq n+k-2$. To
prove $f(n,k)\geq n+k-2$, it suffices to find a connected graph $G$
satisfying $m=n+k-3$ and $mc(G)\leq k-1$. Let $H$ denote the graph
obtained from a copy of $K_{n-2}$ by adding two vertices $u$, $v$
and joining $u$ to some vertices in $K_{n-2}$ and joining $v$ to all
the other vertices in $K_{n-2}$. Obviously, $m(H)={n \choose 2}-n+1$
and $diam(H)=3$. Applying Theorem \ref{thm1}, we have $mc(H)={n
\choose 2}-2n+3$. In fact, $H$ is just the graph we want for $k={n
\choose 2}-2n+4$. For $2\leq k\leq {n \choose 2}-2n+3$, we take a
proper connected spanning subgraph $G$ of $H$ with $n+k-3$ edges. It
follows from Lemma \ref{lem1} that $mc(G)=k-1$. This completes the
proof of (1).

Proving (2) amounts to showing that if $k={n \choose 2}-2n+2t+1$ or
$k={n \choose 2}-2n+2t+2$ ($2\leq t\leq n-1$), then $f(n,k)={n
\choose 2}-n+t+1$. Let $k_1={n \choose 2}-2n+2t+1$, and $k_2={n
\choose 2}-2n+2t+2$. It follows from Corollary \ref{cor1} that
$f(n,k_2)\leq {n \choose 2}-n+t+1$. Since $f(n,k_1)\leq f(n,k_2)$,
if we prove $f(n,k_1)\geq {n \choose 2}-n+t+1$, then
$f(n,k_1)=f(n,k_2)={n \choose 2}-n+t+1$. So it suffices to find a
connected graph $G$ satisfying $m(G)={n \choose 2}-n+t$ and
$mc(G)\leq k_1-1={n \choose 2}-2n+2t$ for all $2\leq t\leq n-1$. If
$t=2$ (thus $n\geq 3$), then we can take $G=P_3,C_4$ for $n=3,4$,
respectively; for $n\geq 5$, we take the graph $G$ obtained from a
copy of $K_{n-2}$ by adding two adjacent vertices $u$, $v$ and
joining $u$ to exactly one vertex in $K_{n-2}$ and joining $v$ to
all the other vertices in $K_{n-2}$. It is easy to see that $m(G)={n
\choose 2}-n+2$, $\delta(G)=2$ and $u$ is the only vertex of degree
2. Since $G$ is not 2-perfectly-connected, it follows from Theorem
\ref{thm2} that $mc(G)\leq {n \choose 2}-2n+4$. If $3\leq t\leq
n-1$, then by Lemma \ref{lem4} we can take the graph $G_n^t$.
\end{proof}

\subsection{The result for $g(n,k)$}

We start with a useful lemma.
\begin{lemma}\label{lem5}
Let $G$ be a connected graph with $n$ vertices and $m$ edges. If
$\binom{n-t}{2}+t(n-t)\leq m\leq \binom{n-t}{2}+t(n-t)+(t-2)$ for
$2\leq t\leq n-1$, then $mc(G)\leq m-t+1$. Moreover, the bound is
sharp.
\end{lemma}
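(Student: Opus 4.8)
The plan is to prove the inequality and the sharpness separately, and throughout to work with the number of non-edges $p:=\binom{n}{2}-m$. Since $\binom{n-t}{2}+t(n-t)=\binom{n}{2}-\binom{t}{2}$ and $\binom{t}{2}-(t-2)=\binom{t-1}{2}+1$, the hypothesis on $m$ is exactly $\binom{t-1}{2}+1\le p\le\binom{t}{2}$; that is, $G$ has just slightly more than $\binom{t-1}{2}$ non-edges. For the bound $mc(G)\le m-t+1$ I would argue by contradiction: assume $mc(G)\ge m-t+2$ and fix an extremal MC-coloring, so (by the Caro--Yuster structure result) every color class is a tree. Let $T_1,\dots,T_\ell$ be the nontrivial color trees and put $v_i=|V(T_i)|$. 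Counting edges and colors in the usual way, the total number of wasted colors is $\sum_{i=1}^{\ell}(v_i-2)=m-mc(G)\le t-2$. Two facts then drive the estimate: (a) every non-edge $xy$ of $G$ is joined by a monochromatic path of length at least $2$, hence $\{x,y\}\subseteq V(T_i)$ for some $i$; and (b) the $v_i-1$ edges of $T_i$ are genuine edges of $G$, so at most $\binom{v_i}{2}-(v_i-1)=\binom{v_i-1}{2}$ of the pairs inside $V(T_i)$ are non-edges. Combining, $p\le\sum_{i=1}^{\ell}\binom{v_i-1}{2}$. Using the elementary identity $\binom{a+b+1}{2}=\binom{a+1}{2}+\binom{b+1}{2}+ab$ (so amalgamating trees never decreases the bound) and the monotonicity of $\binom{\cdot}{2}$, one obtains $\sum_i\binom{v_i-1}{2}\le\binom{\sum_i(v_i-2)+1}{2}\le\binom{t-1}{2}$, hence $p\le\binom{t-1}{2}$, contradicting $p\ge\binom{t-1}{2}+1$.

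For the sharpness claim I would exhibit, for each admissible $m$, a connected graph $G$ on $n$ vertices with $m$ edges and $mc(G)=m-t+1$. Take a set $W=\{v_1,\dots,v_t\}$ and let $\overline{G}$ consist of a clique on $\{v_1,\dots,v_{t-1}\}$ together with $q:=p-\binom{t-1}{2}$ edges joining $v_t$ to vertices of that clique; since $\binom{t-1}{2}+1\le p\le\binom{t}{2}$ we have $1\le q\le t-1$, so $\overline{G}$ has exactly $p$ edges, all inside $W$, and $G:=K_n-\overline{G}$ has $m$ edges. As $t\le n-1$ there is a vertex $u_0\notin W$, and $u_0$ is adjacent to everything, so $G$ is connected. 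Now color the star with center $u_0$ and leaves $v_1,\dots,v_t$ with one color and give every other edge of $G$ its own fresh color. Every pair of vertices inside $W$ is joined by a monochromatic path through $u_0$, while every pair meeting $V(G)\setminus W$ is already an edge of $G$; thus this is an MC-coloring, and it uses $1+(m-t)=m-t+1$ colors. Together with the first part this forces $mc(G)=m-t+1$.

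The main obstacle is that the general-purpose upper bounds of Theorems~\ref{thm1} and~\ref{thm2} need not be tight in this range: Theorem~\ref{thm2}$(3)$, for instance, only yields $mc(G)\le m-n+\delta(G)=m-1-\Delta(\overline{G})$, which beats $m-t+1$ only when $\Delta(\overline{G})\ge t-2$, whereas $\overline{G}$ can have about $\binom{t}{2}$ edges spread thinly over $t+O(1)$ vertices with maximum degree much smaller than $t-2$, so the gap can be arbitrarily large. Hence the counting must be carried out directly on the color trees of an extremal coloring. Within that counting the delicate point is to use the sharper per-tree estimate $\binom{v_i-1}{2}$ rather than $\binom{v_i}{2}$: the weaker estimate would only give $p\le\binom{t}{2}$, which is fully consistent with the hypothesis and proves nothing, while the saving of one edge per tree (namely the tree's own $v_i-1$ edges) is precisely what closes the gap to $\binom{t-1}{2}$.
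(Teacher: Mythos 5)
Your proposal is correct and follows essentially the same route as the paper: bound the number of nonadjacent pairs coverable by the nontrivial color trees of an extremal MC-coloring by $\sum_i\binom{v_i-1}{2}$, compare with the number of non-edges $\binom{n}{2}-m\geq\binom{t-1}{2}+1$ to force at least $t-1$ wasted colors, and prove sharpness with a graph whose complement is confined to a $t$-set, colored by one monochromatic star into that set. The only differences are cosmetic: you replace the paper's explicit convexity maximization by the superadditivity identity $\binom{a+b+1}{2}=\binom{a+1}{2}+\binom{b+1}{2}+ab$, and your extremal example fixes the extra complement edges explicitly rather than adding them arbitrarily.
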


\begin{proof}
Let $f$ be a simple extremal MC-coloring of $G$. Suppose that $f$
contains $\ell$ nontrivial color trees $T_1,\ldots,T_{\ell}$, where
$t_i=|V(T_i)|$. Since $2\leq t\leq n-1$, we have $m\leq
\binom{n}{2}-1$, i.e., $G$ is not a complete graph. Thus $\ell\geq
1$. As $T_i$ has $t_i-1$ edges, it wastes $t_i-2$ colors. So it
suffices to prove that $\sum^{\ell}_{i=1}{(t_i-2)}\geq t-1$. Since
each $T_i$ can monochromatically connect at most $\binom{t_i-1}{2}$
pairs of nonadjacent vertices in $G$, we have
$$\sum^{\ell}_{i=1}{\binom{t_i-1}{2}}\geq \binom{n}{2}-m.$$
Assume that $\sum^{\ell}_{i=1}{(t_i-2)}<t-1$, namely,
$\sum^{\ell}_{i=1}{(t_i-1)}<t-1+\ell$. As each $T_i$ is nontrivial,
we have $t_i-1\geq 2$, thus $1\leq \ell\leq t-2$. By straightforward
convexity, the expression $\sum^{\ell}_{i=1}\binom{t_i-1}{2}$,
subject to $t_i-1\geq 2$, is maximized when $\ell-1$ of the $t_i's$
are equal to 3, and one of the $t_i's$, say $t_{\ell}$, is as large
as it can be, namely, $t_{\ell}-1$ is the largest integer smaller
than $t-1+\ell-2(\ell-1)=t-\ell+1$. Hence $t_{\ell}-1=t-{\ell}$. Now
\begin{align*}
\sum^{\ell}_{i=1}{\binom{t_i-1}{2}} & \leq (\ell-1)+\binom{t-\ell}{2}\\
& =\frac{1}{2}\left(t^2-t-2+\ell^2+(3-2t)\ell\right)\\
& \leq \binom{t-1}{2}\ \ (\text{take}~\ell=1)\\
& <\binom{t-1}{2}+1.
\end{align*}
For a contradiction, we just need to show that $\binom{t-1}{2}+1\leq
\binom{n}{2}-m$. In fact,
\begin{align*}
\binom{t-1}{2}+1+m & \leq \binom{t-1}{2}+1+\binom{n-t}{2}+t(n-t)+(t-2)\\
& =\binom{n}{2}.
\end{align*}

Next we will show that the bound is sharp. Let $G$ be the graph
defined as follows: at first, take a complete $(n-t+1)$-partite
graph $K$ with vertex classes $V_1,\ldots,V_{n-t+1}$ such that
$|V_j|=1$ for $1\leq j\leq n-t$, and $|V_{n-t+1}|=t$; then, add the
remaining edges (at most $t-2$) to $V_{n-t+1}$ randomly. Now assign
the edges between $V_1$ and $V_{n-t+1}$ with one color, and every
other edge a distinct fresh color. It is easily checked that this is
an $MC$-coloring of $G$ using $m-t+1$ colors, which implies
$mc(G)\geq m-t+1$. Hence $mc(G)=m-t+1$.
\end{proof}
With the aid of Lemma \ref{lem5}, we determine the exact value of
$g(n,k)$.

\begin{theorem}\label{thm4}
Given two positive integers $n$ and $k$ with $1\leq k\leq {n \choose 2}$,
\begin{align*}
g(n,k)=
\begin{cases}
\binom{n}{2} & \textrm{\ \ \ \ if $k=\binom{n}{2}$}\\
k+t-1 & \textrm{\ \ \ \ if $\binom{n-t}{2}+t(n-t-1)+1\leq k\leq \binom{n-t}{2}+t(n-t)-1$}\\
k+t-2 & \textrm{\ \ \ \ if $k=\binom{n-t}{2}+t(n-t)$}
\end{cases}
\end{align*}
for $2\leq t\leq n-1$.
\end{theorem}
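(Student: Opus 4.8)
The plan is to work with the equivalent quantity $t(n,k+1)=\min\{|E(G)|:|V(G)|=n,\ G\ \text{connected},\ mc(G)\ge k+1\}$; as recorded in the introduction, $g(n,k)=t(n,k+1)-1$. So to establish $g(n,k)=N$ it is enough to prove (i) every connected graph $G$ on $n$ vertices with $|E(G)|\le N$ satisfies $mc(G)\le k$, and (ii) there is a connected graph $G$ on $n$ vertices with $|E(G)|=N+1$ and $mc(G)\ge k+1$. The case $k=\binom n2$ is immediate since $mc(G)\le|E(G)|\le\binom n2$ always.

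Everything else rests on a short arithmetic observation. Set $a_t:=\binom{n-t}{2}+t(n-t)=\binom n2-\binom t2$; then $a_t-t=\binom n2-\binom{t+1}{2}$ and $a_t+(t-2)=a_{t-1}-1$. Hence, as $t$ ranges over $\{2,\dots,n-1\}$, the intervals $[a_t,\,a_{t-1}-1]$ partition $\{n-1,\dots,\binom n2-1\}$, which is precisely the set of possible edge counts of connected non-complete graphs on $n$ vertices; and the $k$-intervals appearing in the theorem, namely $[a_t-t+1,\,a_t-1]$ and the singletons $\{a_t\}$ for $t\in\{2,\dots,n-1\}$, together with $\{\binom n2\}$, partition $\{1,\dots,\binom n2\}$. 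I will refer to $[a_s,\,a_{s-1}-1]$ as the $s$-th block.

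For (i) I use Lemma \ref{lem5} in the form: if $|E(G)|=m$ lies in the $s$-th block, then $mc(G)\le m-s+1$. Suppose $k\in[a_t-t+1,\,a_t-1]$, so the target is $N=k+t-1$, and let $G$ be connected with $m:=|E(G)|\le k+t-1\le a_{t-1}-1$. If $m\ge a_t$, then $m$ lies in the $t$-th block and $mc(G)\le m-t+1\le k$. If $m<a_t$, then $m$ lies in the $s$-th block for some $s\ge t+1$, so $mc(G)\le m-s+1\le m-t\le a_t-t-1<k$. The singleton case $k=a_t$, with target $N=a_{t-1}-1$, is treated identically. For (ii) I invoke the sharpness construction from the proof of Lemma \ref{lem5}: for each $s\in\{2,\dots,n-1\}$ and each $m$ in the $s$-th block there is a connected graph on $n$ vertices, namely a complete $(n-s+1)$-partite graph with one part of size $s$ and the rest singletons, with up to $s-2$ extra edges inside the big part, having $m$ edges and $mc=m-s+1$. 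When $k\in[a_t-t+1,\,a_t-2]$, choosing $s=t$ and $m=k+t\in[a_t+1,\,a_{t-1}-1]$ produces a graph with $mc=k+1$. For the boundary value $k=a_t-1$ and for $k=a_t$, the required number of edges is $N+1=a_{t-1}$, which is the left endpoint of the $(t-1)$-st block, so choosing $s=t-1$ and $m=a_{t-1}$ gives a graph with $mc=a_{t-1}-t+2=a_t+1\ge k+1$. The only cases left out are $t=2$, where Lemma \ref{lem5} with $s=t-1$ does not apply; there $K_n$ with its $\binom n2$ edges and $mc(K_n)=\binom n2$ serves.

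The step I expect to be the real obstacle, and the source of the theorem's slightly uneven shape, is realising that the maximum of $mc$ over connected $n$-vertex graphs with $m$ edges grows by $2$, not $1$, as $m$ goes from $a_{t-1}-1$ to $a_{t-1}$, so the value $a_t$ is never attained as such a maximum. This is exactly why $k=a_t-1$ and $k=a_t$ are forced to the same value of $g$, and why for those $k$ the extremal graph in (ii) must be taken from the parameter-$(t-1)$ family of Lemma \ref{lem5} rather than the parameter-$t$ family. Making the interval arithmetic line up exactly, above all the identity $a_t+(t-2)=a_{t-1}-1$, is the main point requiring care.
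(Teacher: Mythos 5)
Your proposal is correct and takes essentially the same route as the paper: both directions are reduced to Lemma \ref{lem5}, using its upper bound $mc(G)\le m-t+1$ on each edge-count block $\left[\binom{n}{2}-\binom{t}{2},\ \binom{n}{2}-\binom{t-1}{2}-1\right]$ to get $g(n,k)\ge k+t-1$ (resp.\ $k+t-2$), and its sharpness construction (with parameter $t$ in the interior of the $k$-range, and parameter $t-1$ at the boundary values $k=\binom{n-t}{2}+t(n-t)-1$ and $k=\binom{n-t}{2}+t(n-t)$) to get the matching upper bound. Your write-up is merely more explicit than the paper's about the block arithmetic, the case of smaller edge counts falling into lower blocks, and the $t=2$ corner case where $K_n$ serves as the extremal example.
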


\begin{proof}
If $k=\binom{n}{2}$, then clearly $g(n,k)=\binom{n}{2}$. If
$\binom{n-t}{2}+t(n-t-1)+1\leq k\leq \binom{n-t}{2}+t(n-t)-1$ for
$2\leq t\leq n-1$, it follows from Lemma \ref{lem5} that if $m(G)\leq
k+t-1$, then $mc(G)\leq k$. Hence, $g(n,k)\geq k+t-1$. Now let $G'$
be the graph as described in Lemma \ref{lem5} with $k+t$ edges.
Then $mc(G')=k+1>k$ for $\binom{n-t}{2}+t(n-t-1)+1\leq k\leq
\binom{n-t}{2}+t(n-t)-2$, and $mc(G')=k+2>k$ for
$k=\binom{n-t}{2}+t(n-t)-1$. So we have $g(n,k)\leq k+t-1$, and thus
$g(n,k)= k+t-1$. If $k=\binom{n-t}{2}+t(n-t)$ for $2\leq t\leq n-1$,
it follows from Lemma \ref{lem5} that if $m(G)\leq k+t-2$, then
$mc(G)\leq k-1<k$. Hence, $g(n,k)\geq k+t-2$. Now let $G''$ be the
graph as described in Lemma \ref{lem5} with $k+t-1$ edges. Then
$mc(G'')=k+1>k$. So we have $g(n,k)\leq k+t-2$, and thus $g(n,k)=
k+t-2$.
\end{proof}

\end{document}